\newcommand{\R}{\mathbb{R}}
\newcommand{\N}{\mathbb{N}}
\newcommand{\Z}{\mathbb{Z}}
\newcommand{\ca}{\mathcal A}
\newcommand{\twosum}[2]{\sum_{\substack{#1\\#2}}}
\newtheorem{thm}{Theorem}[section]
\newtheorem{lem}[thm]{Lemma}
\begin{document}
\title{Almost prime values of binary forms with one prime variable}
\author{A.J. Irving\\
Mathematical Institute, Oxford}
\date{}
\maketitle

\begin{abstract}
By establishing an improved level of distribution we study almost primes of the form $f(p,n)$ where $f$ is an irreducible binary form over $\Z$.
\end{abstract}

\section{Introduction}

A well known problem in number theory is to show that if $f\in \Z[x]$ is an irreducible polynomial with $\deg f\geq 2$ then, provided the values of $f$ have no fixed prime divisor, there are infinitely many $n\in \Z$ for which $f(n)$ is prime.  This seems to be out of reach of current methods.  However, using sieves one can show that there are infinitely many $n\in \Z$ for which $f(n)$ has a small number of prime factors.  Let $P_r$ denote numbers with at most $r$ prime factors, counted with multiplicities, and let $k=\deg f$.  Richert \cite{richert} showed that there are infinitely many $n$ for which $f(n)$ is a $P_{k+1}$.  An even harder question is to ask whether there are infinitely many primes $p$ for which $f(p)$ is itself prime.  This was also considered by Richert who showed that there are infinitely many $p$ for which $f(p)$ is a $P_{2k+1}$, (provided we impose conditions on $f$ to avoid the obvious counterexamples).

Both problems are made easier if we consider irreducible binary forms $f\in \Z[x,y]$ instead of single variable polynomials.  A theorem of Fermat states that any prime $p\equiv 1\pmod 4$ is the sum of two squares and therefore the binary quadratic form $m^2+n^2$ represents infinitely many primes.  The case of a general binary quadratic form was handled by Dirichlet.  Much more recently, Heath-Brown \cite{rhbcube} showed that the  cubic $m^3+2n^3$ represents infinitely many primes.  If $f$ is a binary form with $k\geq 4$ then the best result known is due to Greaves \cite{greaves} who showed that if $f$ is irreducible then the values $f(m,n)$ are infinitely often $P_{[k/2]+1}$, provided of course that they have no fixed prime divisor.  In this paper we will consider the values $f(p,n)$ of a binary form where $n$ is an integer and $p$ a prime.  A result of Fouvry and Iwaniec \cite{fouvryiwaniec} shows that there are infinitely many primes of the form $p^2+n^2$; we are unaware of any existing results dealing with higher degree forms.  It is clear that by fixing the prime variable $p$ and applying the above result of Richert to the resulting polynomial values we can obtain infinitely many $P_{k+1}$.  We will improve this result for all $k\geq 3$ as follows.

\begin{thm}\label{almostprimeform}
Let $f\in \Z[x,y]$ be an irreducible binary form of degree $k\geq 3$.   Suppose that for every prime $p$ we have 
$$\#\{n\pmod p:f(1,n)\equiv 0\pmod p\}<p.$$ 
There are then infinitely many pairs $(p,n)$ with $n\in\Z$ and $p$ prime for which $f(p,n)$ is a $P_{[3k/4]+1}$.
\end{thm}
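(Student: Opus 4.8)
The plan is to set $f(p,n)$ up as a sieve problem of dimension one and to supply it with the best possible level of distribution; the exponent $[3k/4]$ is exactly what a level of distribution of $X^{4/3}$ yields in a weighted linear sieve. Concretely, fix a large $X$ and let
$$\ca=\bigl\{\,f(p,n)\ :\ X<p\le 2X\text{ prime},\ n\asymp X\,\bigr\},$$
the pair $(p,n)$ ranging over a cone on which $f$ is positive and $f(p,n)\asymp X^{k}$ (after replacing $f$ by $-f$ if necessary). Note that $f(p,n)\ne 0$, since $f(1,y)$ is irreducible of degree $k\ge 3$ and so has no rational zero, and that each value of $f$ is attained $O_{\varepsilon}(X^{\varepsilon})$ times by the Thue bound, so $\ca$ is a well-behaved integer sequence. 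Writing $\ca_{d}=\#\{(p,n):d\mid f(p,n)\}$ and using homogeneity, $d\mid f(p,n)\iff n\equiv\rho p\pmod d$ for some $\rho$ with $f(1,\rho)\equiv 0\pmod d$, so the natural density is $g(d)=\rho_{1}(d)/d$ with $\rho_{1}(d)=\#\{\rho\bmod d:f(1,\rho)\equiv 0\}$. Since $f(1,y)$ is irreducible, a standard consequence of the Chebotarev density theorem gives $\sum_{\ell\le z}\rho_{1}(\ell)(\log\ell)/\ell\sim\log z$, so the sieve has dimension $\kappa=1$, and the hypothesis $\rho_{1}(p)<p$ rules out a fixed prime divisor. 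Everything then reduces to a level-of-distribution estimate
$$\sum_{d\le D}\mu^{2}(d)\,\bigl|\ca_{d}-g(d)|\ca|\bigr|\ \ll_{A}\ \frac{|\ca|}{(\log X)^{A}}\qquad(A>0),$$
which I claim holds with $D=X^{4/3-\varepsilon}$; this is the input I want.

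To establish the level of distribution I would first detect the congruence by Poisson summation in $n$, turning $\ca_{d}-g(d)|\ca|$ into $\tfrac{X}{d}\sum_{0<|h|\le H}\widehat w(hX/d)\sum_{X<p\le 2X}\Lambda(p)\sum_{\rho}e_{d}(h\rho p)$ with $H\asymp dX^{\varepsilon-1}$, so that for $d\le X^{4/3}$ one has $|h|\le X^{1/3+\varepsilon}$ and the reduced denominator of $h\rho/d$ exceeds $X^{1-\varepsilon}$. The resulting exponential sums over primes lie beyond the reach of termwise bounds once $d>X$ (Vinogradov's estimate is then worse than trivial) and beyond the Bombieri--Vinogradov range, so genuine averaging over the triple $(d,h,\rho)$ is essential; this is the ``improved level of distribution'' of the title. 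I would decompose $\Lambda$ by Vaughan's identity into $O(\log^{2}X)$ Type I pieces $\sum_{\ell\sim L}a_{\ell}\sum_{m\sim M}(\cdots)$ and Type II pieces $\sum_{\ell\sim L}\sum_{m\sim M}a_{\ell}b_{m}(\cdots)$ with $LM\asymp X$. The Type I sums I would handle by completing the $m$-summation and estimating the resulting complete exponential sums modulo $d$ — Kloosterman-/Gauss-type sums controlled by Weil's bound — and summing over $\ell,\rho,h,d$ leaves an acceptable contribution throughout $d\le X^{4/3-\varepsilon}$. The Type II sums are the heart of the matter: apply Cauchy--Schwarz in one bilinear variable, open the square, execute the remaining summation by completion (again meeting complete sums bounded via Weil) combined with a large-sieve inequality, and bound the diagonal by the expected size. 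Balancing the length $L$ of the Vaughan split, the range $H$ of the dual variable, and the modulus $d$ against these estimates pins down the admissible exponent, the optimum being $D=X^{4/3-\varepsilon}$. A separate, routine step discards the degenerate moduli — those sharing a large factor with $h\rho$, those not coprime to the leading coefficients or discriminant of $f$, and the non-squarefree $d$ — all of which are sparse.

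The main obstacle is precisely this Type II estimate: one must extract cancellation from a bilinear form in $\ell$ and $m$ that is simultaneously twisted by the completion variable $h$ and by the roots $\rho$ of $f(1,\cdot)\bmod d$, uniformly as $d$ ranges up to $X^{4/3}$ — far past the $X^{1/2}$ barrier of Bombieri--Vinogradov — and it is the Weil bound for the attendant complete sums, exploited on average, that supplies the improvement; keeping the two variables $p$ and $n$ in play together is what makes the method succeed where the naive approaches (fix $p$ and apply Richert to the polynomial in $n$, or fix $n$ and use Bombieri--Vinogradov in $p$) stall at $D=X$ and $D=X^{1/2}$ respectively. Once $D=X^{4/3-\varepsilon}$ is in hand, I would feed it, together with $\kappa=1$ and the fact that $|f(p,n)|\asymp X^{k}=D^{3k/4+o(1)}$, into the weighted linear sieve of Richert in the refined form of Greaves: for a one-dimensional sieve this produces, from a level of distribution $D$ and numbers of size $D^{u}$, infinitely many values that are $P_{[u]+1}$, and here $u=3k/4$. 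Hence there are infinitely many pairs $(p,n)$ with $p$ prime for which $f(p,n)=P_{[3k/4]+1}$, as claimed.
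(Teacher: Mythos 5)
Your reduction to a sieve problem of dimension one and the final appeal to the weighted sieve match the paper's framework, but the heart of your argument --- the level of distribution $D=X^{4/3-\varepsilon}$ --- is asserted rather than proved, and the route you sketch for it is precisely the one that is not known to work for $\deg f\geq 3$. After Poisson summation you face sums of the shape $\sum_{\rho}\sum_{p}\Lambda(p)e(h\rho p/d)$ with $d$ as large as $X^{4/3}$, where $\rho$ runs over roots of $f(1,\cdot)\bmod d$. Extracting cancellation there, even after Vaughan's identity, Cauchy--Schwarz and completion, requires genuine equidistribution information about these roots as $d$ varies; such information is available for quadratics (this is what drives Fouvry--Iwaniec for $p^2+n^2$) but is a well-known open problem for degree $\geq 3$, as the paper itself points out. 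You give no computation showing that the Weil bound for the attendant complete sums, combined with a large sieve, survives the balancing and lands at the exponent $4/3$; ``the optimum being $D=X^{4/3-\varepsilon}$'' is exactly the claim that needs proof. The paper's actual mechanism is different and avoids primes altogether: the coefficients $\alpha_m$ are arbitrary bounded numbers (their support on primes is used only to dispose of terms with $(m;d)>1$); following Daniel, the solutions of $f(m,n)\equiv 0\pmod d$ with $(m;d)=1$ are organised into $\nu(d)$ lattices of determinant $d$, a reduced basis is chosen so that after Poisson summation the essential phase has denominator $B_{11}\ll d^{1/2}$ rather than $d$, and Montgomery's maximal large sieve applied to the resulting Farey fractions with denominators $\ll D^{1/2}$, on average over the $\approx D^{1/2}$ admissible values of $B_{11}$, is what makes every $D\leq N^{4/3-\eta}$ admissible (Theorems \ref{latticelargesieve} and \ref{formlod}). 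No bilinear Type II estimate over primes, and no Weil-type bound, occurs anywhere.

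Two further points in your sieve step are glossed over, though they are repairable. First, the weighted sieve does not give $P_{[u]+1}$ from size $D^{u}$ for free: one gets $P_r$ for $r>u+\delta_r$, and since $u=3k/4$ can have fractional part $3/4$, one needs $\delta_r<1/4$, which is why the paper invokes Greaves' refined value $\delta_r=0.144\ldots$ rather than the simplest weighted sieve. Second, the weighted sieve counts the small prime factors of $f(p,n)$ without multiplicity, so to conclude that the values are $P_{[3k/4]+1}$ with multiplicity one needs an additional estimate showing few values are divisible by $p^2$ for $p$ in the relevant middle range; this is the role of Lemma \ref{primesquare} in the paper and is absent from your proposal. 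The essential gap, however, is the unproved Type II/level-of-distribution claim.
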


The proof of this depends on an improved ``level of distribution'' result for the values $f(p,n)$.  Roughly speaking, we count the number of these which are divisible by an integer $d$ when $p$ and $n$ have size $N$.  If we were to consider each prime $p$ separately then we could only handle $d\leq N^{1-\delta}$ for any $\delta>0$.  We will show that we can obtain a result on average over $d$ provided that $d\leq N^{4/3-\delta}$.  Theorem \ref{almostprimeform} then follows easily by using the weighted sieve.  The  details of our level of distribution are somewhat technical so we will leave a precise statement until Section \ref{sec:lod}.

Our level of distribution  should be compared with Fouvry and Iwaniec's for the values $p^2+n^2$ \cite[Lemma 4]{fouvryiwaniec}.  In our notation their result  essentially states that one can take $d$ as large as $N^{2-\delta}$ for that form.  Their proof depends crucially on the fact that the roots of the congruence $n^2+1\equiv 0\pmod d$ satisfy very strong distribution properties.  This enables them to prove a large sieve inequality for the fractions $n/d$ which is essentially optimal.  Our result also depends on a large sieve type inequality.  However we do not have comparable distribution estimates for the roots of higher degree polynomial congruences and therefore our level of distribution is weaker.  In the next section we will give details of the variant of the large sieve we use.  It concerns the sum of a sequence of coefficients $\alpha_m$, for example the indicator function of the primes, over the points $(m,n)$ in a sublattice of $\Z^2$.  We will show that if we average over a suitable family of lattices then we can control such a sum.  To reduce the binary form question to one concerning lattices we use methods similar to those of Daniel \cite{daniel}.  
 
Throughout this paper we use the notation $(a;b)$ for the highest common factor of the integers $a$ and $b$.  We write $x\sim y$ for the inequality $y\leq x<2x$.  The notation $\|x\|$ denotes the Euclidean length of a vector $x\in\R^2$.  We will denote the indicator function of the primes by $\chi(n)$.  We fix a smooth function $W$ which has compact support in $[0,1]$ and which takes nonnegative values. Finally we adopt the standard convention that $\epsilon$ denotes a small positive quantity whose value may differ at each occurrence.  All our implied constants may depend on $\epsilon$, $W$  and the binary form $f$.

\subsection*{Acknowledgements}

This work was completed as part of my DPhil, for which I was funded by EPSRC grant EP/P505666/1. I am very grateful to the
EPSRC for funding me and to my supervisor, Roger Heath-Brown, for all
his valuable help and advice. 

\section{A Large Sieve for Lattices}

\subsection{Introduction}

Let $\alpha_m$ be a sequence of complex numbers with $|\alpha_m|\leq 1$ and let $\lambda\subseteq\Z^2$ be a lattice.  For $N\geq 0$ we are interested in the quantity 
$$\psi(\lambda,N,\alpha)=\sum_{(m,n)\in \lambda\cap (0,N]\times \Z}\alpha_mW(\frac{n}{N}).$$
We expect that for a typical $\lambda$ we have 
$$\psi(\lambda,N,\alpha)\approx \frac{N\hat W(0)}{\det \lambda}\sum_{m\leq N}\alpha_m.$$
We will show that this holds if we average over a suitable set of lattices $\lambda$.  We will only consider the case that the set of $m$-coordinates of points in $\lambda$:
$$\{m:(m,n)\in \lambda\}$$ 
has greatest common factor $1$, since if this does not hold then only a homogeneous arithmetic progression of $m$ occur so the result cannot be true.

We will write $\det\lambda=d$ and restrict our consideration to lattices with $d\sim D$ for some parameter $D$.  For a given lattice $\lambda$ we let $B_1$ be a nonzero element of $\lambda$ of minimal length and $B_2$ be a vector of minimal length in the elements of $\lambda$ which are not multiples of $B_1$.  It is well known that $\{B_1,B_2\}$ is a basis for $\lambda$ and that 
$$\|B_1\|\|B_2\|\asymp \det \lambda.$$
Let $B$ be the matrix with rows $B_1,B_2$.  Since we are free to choose the signs of both $B_1$ and $B_2$ we may assume that $B_{11}\geq 0$ and $\det B=\det \lambda$.  We know that $\|B_1\|\ll (\det\lambda)^{1/2}$ and thus we have the same bound for $B_{11}$ and $B_{12}$.  We will consider an average over lattices where each possible value for $B_{11}$ occurs at most once but we make no assumption on the distribution of the remaining entries in $B$.  Our result is then as follows.  It should be noted that the shortest nonzero vector in $\lambda$ may not be unique.  In this case we are free to choose the vector in such a way that the conditions of the theorem are satisfied.

\begin{thm}\label{latticelargesieve}
Let $\alpha_m$ be a sequence of complex numbers with $|\alpha_m|\leq 1$ and let $D,M_1\geq 1$.  Let $\Lambda$ be a set of lattices in $\Z^2$ such that if $\lambda\in \Lambda$ then $\det\lambda \sim D$ and, letting $B$ be as above, we have $B_{11}\sim M_1$.  Assume that for each $\lambda\in\Lambda$ the $m$-coordinates of points are coprime, (as described above).  In addition, suppose that for each $m\sim M_1$ we have 
$$\#\{\lambda\in\Lambda: B_{11}(\lambda)=m\}\leq 1.$$
Suppose that  $\delta>0$.  

\begin{enumerate}
\item If $D\leq N^{1-\delta}$ then for any $A>0$ we have 
$$\sum_{\lambda\in\Lambda}\left|\psi(\lambda,N,\alpha)-\frac{N\hat W(0)}{\det \lambda}\sum_{m\leq N}\alpha_m\right|\ll_{\delta,A} N^{-A}.$$

\item If 
$$N^{1-\delta}\leq D< M_1N^{1-\delta}$$
then
$$\sum_{\lambda\in\Lambda}\left|\psi(\lambda,N,\alpha)-\frac{N\hat W(0)}{\det \lambda}\sum_{m\leq N}\alpha_m\right|\ll_{\epsilon,\delta} N^{1+2\delta+\epsilon}M_1^{-1/2}D^{1/2}$$
for any $\epsilon>0$.
\end{enumerate}
\end{thm}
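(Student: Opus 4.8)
The plan is to detect the congruence condition $(m,n)\in\lambda$ by Fourier analysis and then split the resulting character sum into a main term and an error term, the latter being estimated by a large-sieve argument over the family $\Lambda$. First I would fix the basis $\{B_1,B_2\}$ with matrix $B$, so that $(m,n)\in\lambda$ precisely when $(m,n)=uB_1+vB_2$ for integers $u,v$. Writing $m=uB_{11}+vB_{21}$, the condition $B_{11}\sim M_1$ together with the one-to-one hypothesis on the values of $B_{11}$ is what will let me separate the lattices. For each fixed residue class of $m$ modulo $B_{11}$ there is a unique $v\bmod B_{11}$, and then $n$ runs over an arithmetic progression with common difference $\det\lambda/B_{11}\asymp B_{11}^{-1}\det\lambda$; applying Poisson summation in $n$ against $W(n/N)$ produces the main term $\frac{N\hat W(0)}{\det\lambda}\sum_{m\le N}\alpha_m$ (the $h=0$ frequency) plus a sum over nonzero frequencies $h$ with rapidly decaying weights $\hat W(hN\cdot\det\lambda^{-1}B_{11})$.

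In part (1), when $D\le N^{1-\delta}$, the step size of the progression in $n$ is $\det\lambda/B_{11}\le D\le N^{1-\delta}$, hence well below $N$, so the dual variable $h$ effectively ranges only over $|h|\ge 1$ with weight $\hat W$ of argument $\gg N^{\delta}$. Since $W$ is smooth with compact support, $\hat W$ decays faster than any polynomial, so each such term is $O_A(N^{-A})$ and, summing trivially over the $\ll M_1^2\ll D$ lattices and over $m$, everything is still $O_A(N^{-A})$ after adjusting $A$. This case is essentially just Poisson plus the decay of $\hat W$.

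In part (2), with $N^{1-\delta}\le D<M_1N^{1-\delta}$, the progression step $\det\lambda/B_{11}$ may exceed $N$, so the nonzero frequencies $h$ genuinely contribute. After Poisson the error term for a single lattice is of the shape $\sum_{h\ne 0}\hat W(\cdots)\,e(\cdots)\sum_{m}\alpha_m\, e(h\bar r m/B_{11})$ for suitable $\bar r$, i.e. an exponential sum in $m$ with frequency a rational with denominator $B_{11}$. I would now sum over $\lambda\in\Lambda$, using that the denominators $B_{11}$ are distinct and $\sim M_1$, and apply the large sieve inequality (in the classical additive form, with the $\alpha_m$ as coefficients and the fractions $h\bar r/B_{11}$ as well-spaced points) after a Cauchy–Schwarz to handle the extra $h$ and $v$ summations. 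The large sieve bound $\big(M_1^2+N\big)\sum_{m\le N}|\alpha_m|^2$, combined with $|\alpha_m|\le 1$ and the truncation $|h|\ll N^{\epsilon}\det\lambda/(M_1N)=N^{\epsilon}D/(M_1N)$ coming from the decay of $\hat W$, should after bookkeeping yield the stated bound $N^{1+2\delta+\epsilon}M_1^{-1/2}D^{1/2}$.

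The main obstacle is the bookkeeping in part (2): correctly tracking how the weights $\hat W$ restrict the range of $h$, verifying that the points $h\bar r/B_{11}$ are sufficiently well-spaced (or absorbing their multiplicity into the $N^\epsilon$), and balancing the Cauchy–Schwarz so that the large-sieve factor $(M_1^2+N)$ — note $M_1^2\ll D\ll M_1 N^{1-\delta}$ forces $M_1\ll N^{1-\delta}$ so that $N$ dominates — interacts with the $h$- and $v$-ranges to produce exactly the exponents $1+2\delta$ in $N$, $-1/2$ in $M_1$, and $1/2$ in $D$. Getting these exponents right, rather than something slightly lossy, is the delicate part; the arithmetic input (Poisson, large sieve, decay of $\hat W$) is otherwise standard.
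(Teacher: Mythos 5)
Your overall strategy (Poisson summation in the fibre over each $m$, truncation via the decay of $\hat W$, then Cauchy--Schwarz and a large sieve over fractions with denominator $B_{11}$) is the same as the paper's, and your treatment of part (1) is essentially correct. But part (2) has genuine gaps. First, for fixed $m$ the set of $n$ with $(m,n)\in\lambda$ is an arithmetic progression with common difference $\det\lambda=d$, not $d/B_{11}$: writing $v=v_0+tB_{11}$ one finds $n=(B_{12}m+dv_0)/B_{11}+dt$. With your step $d/B_{11}$ the Poisson frequencies would be truncated at $|h|\ll N^{\epsilon}D/(M_1N)<N^{\epsilon-\delta}<1$ throughout the range $D<M_1N^{1-\delta}$, so the error term would vanish identically, contradicting your own remark that the nonzero frequencies ``genuinely contribute''; with the correct step the truncation is $|h|\ll DN^{-1+\delta}$, i.e.\ up to about $M_1N^{-\delta}$ frequencies, and the bookkeeping changes substantially. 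Second, the phase produced by Poisson has denominator $dB_{11}$, namely $e\bigl(mh(B_{12}+d\overline{B_{21}})/(dB_{11})\bigr)$; before one has exponential sums at fractions with denominator $B_{11}$ one must remove the factor $e\bigl(mhB_{12}/(dB_{11})\bigr)$ by partial summation, at a cost $1+N^{\delta}|B_{12}|/B_{11}\ll N^{\delta}D^{1/2}/M_1$ which enters the final exponents (and which forces the use of a maximal large sieve, because of the resulting $\max_{N'\leq N}$). Your proposal skips this step, so even in outline it cannot produce the stated exponents.

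The most serious gap is the one you flag and then wave away: the points $h\overline{B_{21}}/B_{11}$ are neither distinct nor well spaced at scale $M_1^{-2}$, and their multiplicity is \emph{not} $N^{\epsilon}$. When $hb/B_{11}$ is reduced to lowest terms $a/q$ with $q=B_{11}/(B_{11};h)$, a single fraction $a/q$ can arise from as many as $\asymp M_1/q$ pairs $(B_{11},h)$, which for the smallest admissible $q$ is of size $DN^{-1+\delta}$ --- potentially a positive power of $N$. The paper handles precisely this with a dedicated counting lemma: (i) no reduced fraction with $q<N^{1-\delta}M_1D^{-1}$ occurs, and (ii) each $a/q$ occurs $O(M_1/q)$ times, the proof of (ii) using the hypothesis $D<M_1N^{1-\delta}$ to guarantee at most one admissible $h$ per pair $(q,B_{11})$. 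One then decomposes dyadically in $q\sim Q$, applies Montgomery's maximal large sieve to each block to get $M_1Q^{-1}N(N+Q^2)$, and uses the lower bound on $Q$ together with $M_1\ll D^{1/2}$ to bound everything by $N^{1+\delta}D$, whence the stated $N^{1+2\delta+\epsilon}M_1^{-1/2}D^{1/2}$ after Cauchy--Schwarz. Without this multiplicity lemma and the lower bound on $q$ (which is exactly where the hypothesis $D<M_1N^{1-\delta}$ is used), the large-sieve step in your sketch does not go through, and the claimed exponents cannot be verified.
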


It is useful to know when this result is nontrivial.  We note that, since $\#\Lambda\ll M_1$, we have  
$$\sum_{\lambda\in\Lambda}\frac{N\hat W(0)}{\det \lambda}\sum_{m\leq N}\alpha_m\ll\frac{N^2M_1}{D}$$  
and that 
$$N^{1+2\delta+\epsilon}M_1^{-1/2}D^{1/2}<\frac{N^2M_1}{D}$$ 
if and only if 
$$D<N^{2/3-4\delta/3-2\epsilon/3}M_1.$$ 
Our bound can therefore only be nontrivial if $D\leq N^{2/3-\eta}M_1$ for some $\eta>0$.  In particular, since $M_1\ll D^{1/2}$ the largest $D$ we can handle is $D\ll N^{4/3-\eta}$.  However, if $M_1$ is smaller then the range of $D$ must be decreased.

\subsection{Transforming the Sum}

We can write 
$$\lambda=\{(u,v)B:(u,v)\in\Z^2\}.$$
Our assumption that the $m$-coordinates of points in $\lambda$ have greatest common factor $1$ implies that we must have $(B_{11};B_{21})=1$.  In addition, since $B_{11}\sim M_1\geq 1$ we have $B_{11}>0$.

For a fixed $m\in (0,N]$ we consider the quantity 
$$S(m)=\twosum{n\in\Z}{(m,n)\in \lambda}W(\frac{n}{N})=\twosum{(u,v)\in\Z^2}{B_{11}u+B_{21}v=m}W\left(\frac{B_{12}u+B_{22}v}{N}\right).$$
The condition 
$$m=B_{11}u+B_{21}v$$
is equivalent to 
$$m\equiv B_{21}v\pmod {B_{11}}$$
in which case 
$$u=\frac{m-B_{21}v}{B_{11}}.$$
We therefore have 
\begin{eqnarray*}
S(m)&=&\sum_{v\equiv m\overline{B_{21}}\pmod{B_{11}}}W\left(\frac{B_{12}(m-B_{21}v)+B_{11}B_{22}v}{B_{11}N}\right)\\
&=&\sum_{v\equiv m\overline{B_{21}}\pmod{B_{11}}}W\left(\frac{B_{12}m+dv}{B_{11}N}\right)\\
&=&\sum_{u\in\Z}W\left(\frac{B_{12}m+d(m\overline{B_{21}}+uB_{11})}{B_{11}N}\right)\\
&=&\sum_{u\in\Z}W\left(\frac{m(B_{12}+d\overline{B_{21}})}{B_{11}N}+\frac{du}{N}\right).\\
\end{eqnarray*}
We may now apply the Poisson summation formula to deduce that 
$$S(m)=\frac{N}{d}\sum_{v\in\Z}\hat W\left(\frac{vN}{d}\right)e\left(\frac{mv(B_{12}+d\overline{B_{21}})}{dB_{11}}\right).$$
We therefore conclude that 
$$\psi(\lambda,N,\alpha)=\frac{N}{d}\sum_{v\in\Z}\hat W\left(\frac{vN}{d}\right)\sum_{m\leq N}\alpha_me\left(\frac{mv(B_{12}+d\overline{B_{21}})}{dB_{11}}\right).$$
The $v=0$ term in this is 
$$\frac{N\hat W(0)}{d}\sum_{m\leq N}\alpha_m$$
which is precisely the main term we require.

For any $A\in\N$ we may integrate by parts $A$ times to obtain the standard estimate 
$$\hat W(x)\ll_A \min(1,|x|^{-A}).$$
Recall that we have $d\sim D$.  We will truncate the sum over $v$ to $|v|\leq DN^{-1+\delta}$.  Specifically, for any $\delta>0$ and $A\in\N$ we have 
$$\frac{N}{d}\sum_{|v|> DN^{-1+\delta}}\hat W\left(\frac{vN}{d}\right)\sum_{m\leq N}\alpha_me\left(\frac{mv(B_{12}+d\overline{B_{21}})}{dB_{11}}\right)\ll_{\delta,A}N^{-A}.$$

Combining all of the above we see that 
$$\psi(\lambda,N,\alpha)=\frac{N\hat W(0)}{d}\sum_{m\leq N}\alpha_m+\psi_1(\lambda,N,\alpha,\delta)+O_{\delta,A}(N^{-A})$$
where 
$$\psi_1(\lambda,N,\alpha,\delta)=\frac{N}{d}\sum_{0<|v|\leq DN^{-1+\delta}}\hat W\left(\frac{vN}{d}\right)\sum_{m\leq N}\alpha_me\left(\frac{mv(B_{12}+d\overline{B_{21}})}{dB_{11}}\right).$$
It remains to bound $\psi_1$, at least on average over $\lambda$.  This is trivial if 
$DN^{-1+\delta}<1$
that is 
$D<N^{1-\delta}$
as then $\psi_1=0$.  This is thus enough to prove the first assertion in Theorem \ref{latticelargesieve}.  We may therefore assume that $D\geq N^{1-\delta}$.  

We have 
$$\psi_1(\lambda,N,\alpha,\delta)\ll \frac{N}{D}\sum_{0<|v|\leq DN^{-1+\delta}}\left|\sum_{m\leq N}\alpha_me\left(\frac{mv(B_{12}+d\overline{B_{21}})}{dB_{11}}\right)\right|.$$
We will remove the factor 
$e\left(\frac{mvB_{12}}{dB_{11}}\right)$
using partial summation. This results in 
$$\psi_1(\lambda,N,\alpha,\delta)\ll \frac{N}{D}\left(1+N^{\delta}\frac{|B_{12}|}{B_{11}}\right)\sum_{0<|v|\leq DN^{-1+\delta}}\max_{N'\leq N}\left|\sum_{m\leq N'}\alpha_me\left(\frac{mv\overline{B_{21}}}{B_{11}}\right)\right|.$$
Recalling that $B_{12}\ll D^{1/2}$, $B_{11}\sim M_1\ll D^{1/2}$ and using our assumption that each $B_{11}$ occurs at most once we thus see that 
\begin{eqnarray*}
\lefteqn{\sum_{\lambda\in \Lambda}\psi_1(\lambda,N,\alpha,\delta)}\\
&\ll& N^{1+\delta}D^{-1/2}M_1^{-1}\sum_{B_{11}\sim M_1}\max_{(B_{21};B_{11})=1}\sum_{0<|v|\leq DN^{-1+\delta}}\max_{N'\leq N}\left|\sum_{m\leq N'}\alpha_me\left(\frac{mv\overline{B_{21}}}{B_{11}}\right)\right|.\\
\end{eqnarray*}
By Cauchy's inequality we may bound this by 
$$N^{1/2+3\delta/2}M_1^{-1/2}\psi_2(\Lambda,N,\alpha,\delta)^{1/2}$$
where
$$\psi_2(\Lambda,N,\alpha,\delta)=\sum_{B_{11}\sim M_1}\max_{(b;B_{11})=1}\sum_{0<|v|\leq DN^{-1+\delta}}\max_{N'\leq N}\left|\sum_{m\leq N'}\alpha_me\left(\frac{mvb}{B_{11}}\right)\right|^2.$$

\subsection{Applying the Large Sieve}

Each $\frac{vb}{B_{11}}$ occurring in $\psi_2$ is congruent mod $\Z$ to a unique $\frac{a}{q}$ with $(a;q)=1$, $0\leq a< q$ and $q\ll M_1$.  We will group together terms  with the same $a/q$ and bound the resulting sums over dyadic intervals $q\sim Q$.  We must therefore give an upper bound for the number of times each $\frac{a}{q}$ occurs in our sum.

\begin{lem}
Assume that $D,M_1\geq 1$ and $\delta>0$ satisfy 
$$N^{1-\delta}\leq D< M_1N^{1-\delta}.$$
Suppose that for each integer $B_{11}\sim M_1$ we are given an integer $b$ with $(b;B_{11})=1$.  Then, if $(a;q)=1$ and  $0\leq a<q\ll M_1$, we have  
$$\#\{B_{11}\sim M_1,0<|v|\leq DN^{-1+\delta}:\frac{vb}{B_{11}}\equiv\frac{a}{q}\pmod \Z\}=\begin{cases}
0 & q<N^{1-\delta}M_1D^{-1}\\
O(M_1q^{-1}) & \text{otherwise.}\\
\end{cases}$$ 
\end{lem}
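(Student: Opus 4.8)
\section*{Proof proposal}

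The plan is to use the coprimality $(b;B_{11})=1$ to force $q\mid B_{11}$, after which the count reduces to a routine estimate for lattice points in intervals.

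Fix $a,q$ with $(a;q)=1$, $0\leq a<q\ll M_1$. For a given $B_{11}\sim M_1$ with its associated $b$, set $g=(v;B_{11})$. Since $(b;B_{11})=1$ we have $(vb;B_{11})=g$, so $vb/B_{11}$ written in lowest terms has denominator $B_{11}/g$; as $a/q$ is already reduced, the congruence $vb/B_{11}\equiv a/q\pmod{\Z}$ forces $q=B_{11}/g$. In particular $q\mid B_{11}$, say $B_{11}=qt$, and then $t=g$ divides $v$, so $v=tv'$ with $(v';q)=1$. Writing $vb/B_{11}=v'b/q$ and comparing fractional parts gives $v'\equiv a\overline{b}\pmod q$ (here $(b;q)=1$ because $q\mid B_{11}$), so $v'$ lies in a single residue class modulo $q$.

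It remains to count. The constraint $qt=B_{11}\in[M_1,2M_1)$ confines $t$ to an interval of length $\asymp M_1/q$, giving $O(M_1/q)$ choices of $t$ (using $q\ll M_1$), hence of $B_{11}$. For each such $B_{11}$, the variable $v'=v/t$ runs over the nonzero integers in a fixed residue class mod $q$ with $|v'|\leq DN^{-1+\delta}/t$; since $tq=B_{11}\geq M_1$ there are at most $1+O(DN^{-1+\delta}M_1^{-1})$ of these, and the hypothesis $D<M_1N^{1-\delta}$ makes the error term smaller than a constant, so this is $O(1)$. Multiplying gives the bound $O(M_1q^{-1})$. Finally, a nonzero integer $v'$ with $|v'|\leq DN^{-1+\delta}/t$ can only exist when $t\leq DN^{-1+\delta}$; combined with $t\geq M_1/q$ this is impossible once $q<N^{1-\delta}M_1D^{-1}$, so the count then vanishes. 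There is no genuine obstacle here: the one point to watch is that although $b$ depends on $B_{11}$, it is always coprime to $q$ (once $q\mid B_{11}$), so it contributes only a harmless invertible factor and the estimate is uniform in the choice of the $b$'s.
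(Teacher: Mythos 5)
Your proposal is correct and follows essentially the same route as the paper: both deduce from $(b;B_{11})=1$ that $q$ equals $B_{11}/(v;B_{11})$, hence $q\mid B_{11}$, giving $O(M_1q^{-1})$ admissible $B_{11}$ and $O(1)$ admissible $v$ per $B_{11}$ thanks to $DN^{-1+\delta}M_1^{-1}<1$, with the vanishing range coming from the same inequality $q\geq B_{11}/|v|\geq M_1N^{1-\delta}D^{-1}$. The only cosmetic difference is that you count $v'=v/(v;B_{11})$ in a residue class mod $q$ whereas the paper counts $v$ in a residue class mod $B_{11}$; the estimates are identical.
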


\begin{proof}
If 
$$\frac{vb}{B_{11}}\equiv \frac{a}{q}\pmod \Z$$
with $(a;q)=1$ then since $(b;B_{11})=1$ we must have 
$$q=\frac{B_{11}}{(B_{11};v)}\geq \frac{B_{11}}{|v|}\geq M_1N^{1-\delta}D^{-1}.$$
This proves that there are no solutions if $q<N^{1-\delta}M_1D^{-1}$ so the first part of the lemma follows.

For the remainder of the proof we suppose that $q\geq N^{1-\delta}M_1D^{-1}$. If $(a;q)=1$ and  
$$\frac{vb}{B_{11}}\equiv \frac{a}{q}\pmod \Z$$  
then $q|B_{11}$.  It follows that 
$$vb\equiv aB_{11}/q\pmod{B_{11}}.$$
We therefore see that for given $q$ and $B_{11}$ the number of possible $v$ is
$O(DN^{-1+\delta}M_1^{-1}+1)$.  Moreover, since $q\mid B_{11}$ there
are $O(M_1q^{-1})$ possible $B_{11}$.
By assumption we know that 
$$DN^{-1+\delta}M_1^{-1}<1$$
so we may conclude that the quantity of interest is $O(M_1q^{-1})$ as required.
\end{proof}

Using the last lemma we deduce that  the part of $\psi_2$ with $q\sim Q$, for $Q\geq N^{1-\delta}M_1D^{-1}$, is bounded by 
$$M_1Q^{-1}\sum_{q\sim Q}\sum_{(a;q)=1}\max_{N'\leq N}\left|\sum_{m\leq N'}\alpha_m e(\frac{am}{q})\right|^2.$$
Applying a maximal form of the large sieve, as given by Montgomery
\cite{montmaxlarge}, we can majorise this by 
$$M_1Q^{-1}N(N+Q^2)=M_1N(Q^{-1}N+Q).$$
Recall that 
$$N^{1-\delta}M_1D^{-1}\ll Q\ll M_1$$
so our bound is at most 
$$M_1N(N^{\delta}M_1^{-1}D+M_1).$$
We have $M_1\ll D^{1/2}$ so the first term is always larger and the bound is simply 
$N^{1+\delta}D$.  This holds for all the dyadic intervals $q\sim Q$ under consideration so we conclude that for any $\epsilon>0$ we have 
$$\psi_2(\Lambda,N,\alpha,\delta)\ll_\epsilon N^{1+\delta+\epsilon}D$$
and therefore that 
$$\sum_{\lambda\in \Lambda}\psi_1(\lambda,N,\alpha,\delta)\ll_\epsilon N^{1+2\delta+\epsilon}M_1^{-1/2}D^{1/2}.$$
This completes the proof of Theorem \ref{latticelargesieve}.

\section{Level of Distribution}\label{sec:lod}

Rather than only considering the values $f(p,n)$ we will consider values $\alpha_m f(m,n)$ for sequences of complex numbers $\alpha_m$ with $|\alpha_m|\leq 1$.  Letting $\alpha_m$ be the indicator function of the primes will then recover the case in which we are most interested.  Our approach is able to handle any sequence $\alpha_m$ but there are a number of unpleasant technicalities to deal with.  To avoid this we will only consider $\alpha_m$ supported on primes $m$.  We will study the quantity 
$$A_d(N,\alpha)=\twosum{(m,n)\in (0,N]\times\Z}{f(m,n)\equiv 0\pmod d}\alpha_m W(\frac{n}{N}).$$
We expect that for $\alpha_m$ supported on primes we have,  at least on average over a suitable range of $d$, 
$$A_d(N,\alpha)\approx M_d(N,\alpha)$$
where 
$$M_d(N,\alpha)=\frac{N\nu(d)\hat W(0)}{d}\sum_{m\leq N}\alpha_m$$
and $\nu(d)$ is the number of solutions, $n$, of the congruence 
$$f(1,n)\equiv 0\pmod d.$$
We therefore wish to estimate the sum 
$$\sum_{d\sim D}|A_d(N,\alpha)-M_d(N,\alpha)|.$$

\begin{thm}\label{formlod}
Let $\alpha_m$ be a sequence of complex numbers with $|\alpha_m|\leq 1$ supported on prime values of $m$.  Suppose $\delta_1>0$ and $1\leq D\leq N^{4/3-\delta_1}$.  There exists a $\delta_2>0$ depending only on $\delta_1$ such that 
$$\sum_{d\sim D}|A_d(N,\alpha)-M_d(N,\alpha)|\ll_{\delta_1} N^{2-\delta_2}.$$
\end{thm}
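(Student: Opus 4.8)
The plan is to reduce the count $A_d(N,\alpha)$ to a sum over sublattices of $\Z^2$ so that Theorem \ref{latticelargesieve} can be applied. For a fixed modulus $d$ and each root $n_0$ of $f(1,n)\equiv 0\pmod d$, the set of $(m,n)$ with $m$ invertible mod the relevant prime powers and $f(m,n)\equiv 0\pmod d$ splits, by homogeneity of $f$, into a bounded number (uniformly $O_\epsilon(d^\epsilon)$ via the divisor bound and multiplicativity of $\nu$) of residue classes $n\equiv r m\pmod{d'}$ for divisors $d'\mid d$; each such class is a sublattice $\lambda$ of $\Z^2$ of determinant $d'$ whose $m$-coordinates are coprime (this uses that $\alpha_m$ is supported on primes, so we may restrict to $m$ coprime to $d$, discarding the negligibly many primes dividing $d$). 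Thus
$$A_d(N,\alpha)=\twosum{\lambda\text{ associated to }d}{}\psi(\lambda,N,\alpha)+(\text{small error from }p\mid d),$$
and the expected main term $M_d(N,\alpha)$ matches $\sum_\lambda \frac{N\hat W(0)}{\det\lambda}\sum_{m\le N}\alpha_m$ after accounting for the fact that several lattices of determinant $d'<d$ can correspond to one $d$ — one checks the arithmetic factors $\nu(d)/d$ versus $\sum 1/\det\lambda$ agree. This is essentially the reduction of Daniel \cite{daniel}, adapted to two variables.

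Next I would organise the resulting family of lattices. As $d$ ranges over $d\sim D$ and over the $O_\epsilon(D^\epsilon)$ roots and divisor-decompositions, I collect all the lattices $\lambda$ that arise into one big family and sort them by the dyadic sizes of $\det\lambda=d'$ and of the first Minkowski minimum coordinate $B_{11}(\lambda)=M_1$. The crucial point needed to invoke part (2) of Theorem \ref{latticelargesieve} is the multiplicity-one condition: for each $m\sim M_1$ at most one lattice in the subfamily has $B_{11}=m$. Here $B_{11}$ divides $d'$ and is determined, together with the root, by the congruence; a careful bookkeeping — splitting the family further according to the cofactor $d'/\gcd$ and the residue $r$ — should yield subfamilies on which $B_{11}$ is injective, at the cost of only $O_\epsilon(N^\epsilon)$ subfamilies. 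Within each subfamily Theorem \ref{latticelargesieve}(2) gives, when $N^{1-\delta}\le d'<M_1 N^{1-\delta}$,
$$\sum_{\lambda}\left|\psi(\lambda,N,\alpha)-\tfrac{N\hat W(0)}{\det\lambda}\sum_{m\le N}\alpha_m\right|\ll N^{1+2\delta+\epsilon}M_1^{-1/2}D^{1/2},$$
while part (1) handles the range $d'<N^{1-\delta}$ with an $N^{-A}$ saving.

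Then I would sum over the dyadic parameters. Using $M_1\gg d'/\max\|B_2\|\gg d'N^{-1}$ in the extreme case (or more simply splitting according to $M_1\sim 2^j$) and $M_1\le (d')^{1/2}\le D^{1/2}$, the bound $N^{1+2\delta+\epsilon}M_1^{-1/2}D^{1/2}$ is largest when $M_1$ is smallest. The non-triviality discussion after Theorem \ref{latticelargesieve} shows the lattice large sieve beats the trivial bound precisely when $D<N^{2/3-\eta}M_1$; since we must allow $M_1$ as small as roughly $DN^{-1}$ (the smallest first minimum compatible with $\det=d'\le D$ and $n$ confined to an interval of length $N$), feeding $M_1=DN^{-1}$ into $D<N^{2/3-\eta}M_1$ gives exactly $D<N^{4/3-\eta}$, which is the range in the theorem; multiplying the per-subfamily bound by the $O_\epsilon(N^\epsilon)$ count of subfamilies and dyadic ranges still leaves a power saving $N^{2-\delta_2}$ for a suitable $\delta_2(\delta_1)>0$. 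The main obstacle is the combinatorial reduction to lattices together with the verification of the multiplicity-one hypothesis $B_{11}(\lambda)=m$ for at most one $\lambda$: controlling how the short vectors of the congruence lattices distribute, and ensuring the decomposition into injective subfamilies costs only $N^\epsilon$, is where the real work lies — the large sieve input itself is then a black box. A secondary technical nuisance is handling the ranges of $d'$ where $\det\lambda$ is comparable to $N$ but not in the clean regime of either part of Theorem \ref{latticelargesieve}, and the contribution of $m$ not coprime to $d$, both of which I expect to be routine once the main decomposition is in place.
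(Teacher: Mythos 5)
Your overall skeleton matches the paper's (a Daniel-style reduction of the congruence $f(m,n)\equiv 0\pmod d$ to a family of sublattices, the contribution of $(m;d)>1$ killed by the primality of the support of $\alpha_m$, then an application of Theorem \ref{latticelargesieve} to dyadic subfamilies), but the two places you yourself flag as "where the real work lies" contain genuine gaps, and they are exactly where the exponent $4/3$ is decided. First, the multiplicity-one bookkeeping: you claim that splitting by the residue $r$ and a cofactor yields subfamilies on which $B_{11}$ is injective at a cost of only $O_\epsilon(N^\epsilon)$ subfamilies. This is unjustified and cannot be correct. For a fixed value $B_{11}=m_0$ the entry $B_{12}$ ranges over $\gg D^{1/2}$ possibilities, and the right statement (the paper's Lemma \ref{smallcount}, which you never invoke) is that a fixed nonzero vector $(u,v)$ lies in at most $O_\epsilon(\|(u,v)\|^\epsilon)$ of the lattices $\lambda(x)$ with $d\sim D$, proved via the divisor bound applied to $f(u,v)\neq 0$; hence one value of $B_{11}$ can be shared by $O_\epsilon(D^{1/2}N^\epsilon)$ lattices, and the decomposition into injective subfamilies costs a factor $D^{1/2}N^\epsilon$, not $N^\epsilon$. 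Indeed, if your $N^\epsilon$ claim were true the same argument would yield a level of distribution far beyond $N^{4/3}$, contradicting the criticality of $4/3$ discussed after Theorem \ref{latticelargesieve}. The correct accounting is: after removing small $B_{11}$ one has $M_1\asymp D^{1/2}$, each injective subfamily contributes $\ll N^{1+2\delta+\epsilon}M_1^{-1/2}D^{1/2}\approx N^{1+2\delta+\epsilon}D^{1/4}$, and multiplying by the $O_\epsilon(D^{1/2}N^\epsilon)$ subfamilies gives $N^{1+2\delta+\epsilon}D^{3/4}$, which is $\ll N^{2-\delta_2}$ precisely because $D\leq N^{4/3-\delta_1}$. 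Your alternative derivation of $4/3$ is arithmetically wrong: substituting $M_1=DN^{-1}$ into $D<N^{2/3-\eta}M_1$ gives $1<N^{-1/3-\eta}$, an impossibility, not $D<N^{4/3-\eta}$; the exponent $4/3$ comes from $M_1\ll D^{1/2}$ at the top of its range, not from any lower bound on $M_1$ of size $DN^{-1}$.

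Second, the lattices with small $B_{11}$ are not handled. Nothing prevents $B_{11}(x)$ from being as small as $1$, and for such lattices the hypothesis $D<M_1N^{1-\delta}$ of Theorem \ref{latticelargesieve}(2) fails, so the large sieve gives nothing there; these lattices must be excised and estimated separately. The paper does this (the sums $S_1$ and $S_2$): using Lemma \ref{smallcount} one shows there are only $O_\epsilon(D^{1-\eta}N^\epsilon)$ lattices with $B_{11}\leq D^{1/2-\eta}$, and the lattice-point count $N^2/d+N/\|B_1(x)\|+1$ then gives a total contribution $\ll_\epsilon N^{1+\epsilon}D^{1/2}+D^{1-\eta}N^\epsilon+N^{2+\epsilon}D^{-\eta}$, which is acceptable. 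Your closing remark that the awkward ranges are "routine" passes over exactly this argument. Two smaller points: in the paper every lattice in the main term has determinant exactly $d$ (after restricting to classes generated by $(m,n)$ with $(m;d)=1$ the condition is $n\equiv rm\pmod d$), so the $d'\mid d$ bookkeeping and the matching of $\nu(d)/d$ against $\sum 1/\det\lambda$ that you describe is unnecessary; and the fact that nonprimitive solutions can lie in several lattices has to be absorbed into the $(m;d)>1$ error term using $\#\mathcal U(d)\ll_\epsilon d^\epsilon$, which your sketch does not mention.
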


The advantage of working with $\alpha_m$ supported on primes is that the contribution to our sum from points $(m,n)$ with $(m;d)>1$ is small.

\begin{lem}
Under the hypotheses of Theorem \ref{formlod} we have, for any $\epsilon>0$, that 
$$\sum_{d\sim D}\twosum{(m,n)\in (0,N]\times\Z}{(m;d)>1,f(m,n)\equiv
  0\pmod d}|\alpha_m| W(\frac{n}{N})\ll_\epsilon N^{1+\epsilon}.$$
\end{lem}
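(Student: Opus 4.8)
The plan is to exploit the fact that $\alpha_m$ is supported on primes, so $(m;d)>1$ forces $m\mid d$, which is a very restrictive condition. First I would split the sum according to the prime $p\mid m$ dividing $d$; since $m$ is prime, $(m;d)>1$ means $m=p$ for some prime $p\mid d$. Writing $d=p\ell$, the inner count becomes, for each such $p$,
$$\twosum{n\in\Z}{f(p,n)\equiv 0\pmod{p\ell}}W(\tfrac{n}{N}),$$
and we must sum this over $p\sim$ (prime), $\ell$ with $p\ell\sim D$, and $n$. The key observation is that $f(p,n)$ is a binary form, so $f(p,n)=p^k f(1,n/p)$ heuristically — more precisely, reducing modulo $p$ we have $f(p,n)\equiv f(0,n)=c\,n^k\pmod p$ for the appropriate leading-type coefficient $c$, so apart from the bounded number of primes dividing the content of $f$ the congruence $f(p,n)\equiv 0\pmod p$ forces $p\mid n$. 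Thus in the generic case we may write $n=pn'$, and then $f(p,n)=p^k f(1,n')$, so the divisibility by $p\ell$ reduces to a condition on $f(1,n')$ modulo a divisor of $p\ell$, which by the standard bound $\nu(q)\ll_\epsilon q^\epsilon$ on the number of roots has few solutions in each residue class.

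The main steps, in order: (i) reduce to $m=p$ prime with $p\mid d$, write $d=p\ell$; (ii) handle separately the $O(1)$ primes dividing the discriminant/content of $f$, where the trivial bound (the $n$-sum has length $\ll N/p + 1$ times the number of residues, and there are $\ll D/p$ choices of $\ell$) already gives an acceptable total; (iii) for the remaining $p$, use $p\mid n$ to substitute $n=pn'$, so the $n'$-range has length $\ll N/p$, and count solutions of $f(1,n')\equiv 0$ modulo the relevant modulus using $\nu(q)\ll_\epsilon q^\epsilon$; (iv) sum the resulting bound $\ll_\epsilon (N/p)\,q^\epsilon \cdot (\text{number of }(p,\ell)\text{ with }p\ell\sim D)$ over all parameters and check it is $\ll_\epsilon N^{1+\epsilon}$. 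Roughly, after the substitution each triple $(p,\ell)$ contributes $\ll_\epsilon N^\epsilon (N/p)$, and summing $\sum_{p\mid d,\ d\sim D}N/p\ll_\epsilon N^{1+\epsilon}$ uses $\sum_{d\sim D}\sum_{p\mid d}1/p\ll \log D$ together with the fact that the $n'$-sum saved a full factor of $p$.

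The step I expect to be the main obstacle is (iii): correctly tracking the power of $p$ dividing $f(p,n)$ and the resulting modulus for the $f(1,n')$ congruence, since if $p^j\parallel d$ one must be careful that $f(p,n)=p^k f(1,n')$ already contributes $p^k$, and the genuine constraint is $p^{\max(0,j-k)}\ell_0\mid f(1,n')$ where $\ell=p^{j-1}\ell_0$ with $p\nmid \ell_0$; bounding the number of $n'$ in a range of length $N/p$ satisfying this then needs the root-counting bound applied to a modulus that can be as large as $D/p^k$, which is fine since $\nu(q)\ll_\epsilon q^\epsilon$ and the extra saving of $p$ from the shortened $n'$-range is what makes the whole sum $\ll_\epsilon N^{1+\epsilon}$ rather than $N^{1+\epsilon}D/N$ or worse. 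I would also need the easy remark that for $p>N$ there are no valid $(m,n)$ at all since $m\leq N$, so the sum over $p$ is genuinely truncated at $p\leq N$.
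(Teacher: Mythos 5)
Your opening reductions match the paper's: since $\alpha_m$ is supported on primes, $(m;d)>1$ forces $m=p\mid d$, and for $p\nmid f_0$ (the coefficient of $n^k$) the congruence $f(p,n)\equiv 0\pmod p$ forces $p\mid n$. However, step (iv) has a genuine gap. You fix a pair $(p,\ell)$ with $p\ell\sim D$ and count admissible $n'$, then sum over all such pairs. The count of $n'\ll N/p$ with $q\mid f(1,n')$ is $\ll_\epsilon q^\epsilon\left(\frac{N}{pq}+1\right)$, and the ``$+1$'' terms alone contribute, over all pairs $(p,\ell)$, roughly $\sum_{d\sim D}\omega(d)\gg D$, which exceeds $N^{1+\epsilon}$ in the range $N\ll D\leq N^{4/3-\delta_1}$ allowed by Theorem \ref{formlod}. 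Your claimed bookkeeping identity is also false: $\sum_{d\sim D}\sum_{p\mid d}\frac1p$ is of order $D$ (already the even $d$ contribute $\gg D$), not $O(\log D)$, so your claimed total $N^{1+\epsilon}$ would in fact come out as $N^{1+\epsilon}D$ or, at best after a more careful count of the $n'$, as $N^{1+\epsilon}+DN^\epsilon$, which is insufficient. The same defect affects your step (ii), where you again multiply a per-$(p,\ell)$ count by the number $\ll D/p$ of moduli $\ell$.

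The missing idea is to reverse the order of summation in the modulus: fix the point $(m,n)=(p,n)$ first and observe that, since $f$ is irreducible, $f(p,n)\neq 0$, so the number of $d\sim D$ with $d\mid f(p,n)$ (and $p\mid d$) is at most $\tau(f(p,n))\ll_\epsilon N^\epsilon$; no per-modulus ``$+1$'' ever appears. After this divisor-bound step one only needs to count pairs $(p,n)$ with $p\mid f(p,n)$: the $O(1)$ primes $p\mid f_0$ give $O(N)$ pairs, and otherwise $p\mid n$ gives $\ll\sum_{p\leq N}\left(\frac{N}{p}+1\right)\ll N\log N$ pairs, whence the bound $N^{1+\epsilon}$. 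This is exactly how the paper proceeds (it does not even need the substitution $n=pn'$ or the root-counting bound $\nu(q)\ll_\epsilon q^\epsilon$); your route can be repaired by inserting the same swap, i.e.\ by counting, for fixed $p$ and $n'$, the number of admissible $\ell$ via the divisor function of $p^kf(1,n')$, rather than counting $n'$ for each fixed $\ell$.
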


\begin{proof}
Since $\alpha_m$ is supported on primes the condition $(m;d)>1$ implies $m|d$.  We therefore have
\begin{eqnarray*}
\lefteqn{\sum_{d\sim D}\twosum{(m,n)\in
    (0,N]\times\Z}{(m;d)>1,f(m,n)\equiv 0\pmod d}|\alpha_m|
  W(\frac{n}{N})}\\
&\ll&\sum_{d\sim D}\twosum{m\leq N}{m|d}\chi(m)\twosum{n\leq N}{f(m,n)\equiv 0\pmod d}1\\
&=&\sum_{m\leq N}\chi(m)\twosum{n\leq N}{f(m,n)\equiv 0\pmod m}\#\{d|f(m,n):d\sim D,m|d\}\\
&\leq &\sum_{m\leq N}\chi(m)\twosum{n\leq N}{f(m,n)\equiv 0\pmod m}\tau(f(m,n))\\
&\ll_\epsilon&\sum_{m\leq N}\chi(m)\twosum{n\leq N}{f(m,n)\equiv 0\pmod m}N^\epsilon,\\
\end{eqnarray*}
where $\tau$ is the divisor function and we have used the fact that $f$ is irreducible so $f(m,n)\ne 0$.  

Let $f_0$ be the coefficient of $n^{\deg f}$ in $f$.  We have 
$$\twosum{m\leq N}{m|f_0}\chi(m)\twosum{n\leq N}{f(m,n)\equiv 0\pmod m}N^\epsilon\ll_f N^{1+\epsilon}.$$
If a prime $m$ does not divide $f_0$ but  $m|f(m,n)$ then we must have
$m|n$.  Therefore 
$$\twosum{m\leq N}{m\nmid f_0}\chi(m)\twosum{n\leq N}{f(m,n)\equiv
  0\pmod m}N^\epsilon=\twosum{m\leq N}{m\nmid f_0}\chi(m)\twosum{n\leq N}{m|n}N^\epsilon\ll_\epsilon N^{1+\epsilon}.$$
The result follows.
\end{proof}

Our proof of Theorem \ref{formlod} begins by applying methods from the geometry of numbers, similar to those employed by Daniel in \cite{daniel}.  We call a point $(m,n)$ primitive modulo $d$ if $(m;n;d)=1$.  We say that the primitive points $(m_1,n_1)$ and $(m_2,n_2)$ are equivalent modulo $d$ if 
$$(m_2,n_2)\equiv \lambda(m_1,n_1)\pmod d$$
for some $\lambda\in\Z$ which must necessarily satisfy $(\lambda;d)=1$.  We observe that the property $f(m,n)\equiv 0\pmod d$ is preserved by equivalence so we may let $\mathcal U(d)$ be the set of equivalence classes mod $d$ for which it holds.

For each $x\in\mathcal U(d)$ we let $\lambda(x)$ be the lattice in $\Z^2$ generated by the points of $x$.  Thus if we fix an $(m,n)\in x$ then $\lambda(x)$ consists of all the points congruent mod $d$ to some multiple of $(m,n)$.  It follows that $\det \lambda(x)=d$ and that the set of primitive points in $\lambda(x)$ is precisely $x$. Each primitive solution of $f(m,n)\equiv 0\pmod d$ occurs in precisely one lattice $\lambda(x)$ but a nonprimitive solution may occur in more than one.  Since any nonprimitive point has $(m;d)>1$ and $\#\mathcal U(d)\ll_{\epsilon,f} d^\epsilon$, (see for example Daniel \cite[(3.5)]{daniel}), we can handle this multiplicity issue with the last lemma.

We let $\mathcal U'(d)$ be the subset of $\mathcal U(d)$ containing those $x$ generated by a point $(m,n)$ with $(m;d)=1$.  If $x\notin\mathcal U'(d)$ then all $(m,n)\in \lambda(x)$ have $(m;d)>1$.  It is clear that $\#\mathcal U'(d)=\nu(d)$.  We can therefore deduce using the last lemma that 
$$\sum_{d\sim D}|A_d(N,\alpha)-M_d(N,\alpha)|$$
$$\ll_\epsilon N^{1+\epsilon}+\sum_{d\sim D}\sum_{x\in\mathcal U'(d)}\left|\sum_{(m,n)\in \lambda(x)\cap (0,N]\times \Z}\alpha_mW(\frac{n}{N})-\frac{N\hat W(0)}{d}\sum_{m\leq N}\alpha_m\right|.$$
We must therefore bound 
$$S=\sum_{d\sim D}\sum_{x\in\mathcal U'(d)}\left|\psi(\lambda(x),N,\alpha)-\frac{N\hat W(0)}{d}\sum_{m\leq N}\alpha_m\right|$$
where $\psi$ is the quantity studied in the last section.

We let $B_1(x),B_2(x)$ denote the minimal basis of $\lambda(x)$ and write $B(x)$ for the matrix with rows the $B_i$.  If $D\geq N^{\delta_1}$ it is necessary to remove from $S$ any lattices  for which $B_{11}$ is unusually small, say $B_{11}(x)\leq D^{1/2-\eta}$ for some $\eta>0$.  For these lattices we  bound the sums
$$\sum_{d\sim D}\twosum{x\in\mathcal U'(d)}{B_{11}(x)\leq D^{1/2-\eta}}\left|\psi(\lambda(x),N,\alpha)\right|$$
and 
$$\sum_{d\sim D}\twosum{x\in\mathcal U'(d)}{B_{11}(x)\leq D^{1/2-\eta}}\left|\frac{N\hat W(0)}{d}\sum_{m\leq N}\alpha_m\right|.$$
The first sum is bounded by 
$$S_1=\sum_{d\sim D}\twosum{x\in\mathcal U'(d)}{B_{11}(x)\leq D^{1/2-\eta}}\#(\lambda(x)\cap [0,N]^2)$$
whilst the second is at most of order
$$S_2=\frac{N^2}{D}\sum_{d\sim D}\twosum{x\in\mathcal U'(d)}{B_{11}(x)\leq D^{1/2-\eta}}1.$$
We estimate these using the following lemma.  

\begin{lem}\label{smallcount}
Suppose $0\ne (u,v)\in\Z^2$.  Then for any $\epsilon>0$ we have 
$$\#\{(d,x):d\sim D,x\in\mathcal U'(d),(u,v)\in \lambda(x)\}\ll_\epsilon \|(u,v)\|^\epsilon.$$
\end{lem}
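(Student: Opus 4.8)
The plan is to fix the nonzero vector $(u,v)$ and count the pairs $(d,x)$ for which $(u,v)\in\lambda(x)$, exploiting the rigidity of the construction $x\mapsto\lambda(x)$. First I would record the basic structural facts: each $x\in\mathcal U'(d)$ is generated by some primitive point $(m,n)$ with $(m;d)=1$, and $\lambda(x)$ consists exactly of the integer points congruent modulo $d$ to a multiple of $(m,n)$. Hence $(u,v)\in\lambda(x)$ is equivalent to the existence of $\mu\in\Z$ with $(u,v)\equiv\mu(m,n)\pmod d$. Since $(m;d)=1$ this forces $\mu\equiv u\overline m\pmod d$, so $\mu$ is determined mod $d$, and then $v\equiv u\overline m\,n\pmod d$, i.e.
$$
um^{-1}n\equiv v\pmod d,\qquad\text{equivalently}\qquad un\equiv vm\pmod d.
$$
Thus a pair $(d,x)$ with $(u,v)\in\lambda(x)$ is determined by a divisor-like datum: the modulus $d\sim D$ together with the residue class of $(m,n)$ mod $d$, and that class is constrained by $un\equiv vm\pmod d$ and by $f(m,n)\equiv0\pmod d$.

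The key step is to convert the congruence $un\equiv vm\pmod d$ into a genuine divisibility statement that does not depend on $d$. Put $g=(u;v)$ and write $u=gu'$, $v=gv'$ with $(u';v')=1$. If $(u,v)\in\lambda(x)$ then $(u,v)$ is a nonzero element of $\lambda(x)$, so $\det\lambda(x)=d$ divides nothing automatically, but the primitive vector $(u'/\,;\,)$... more usefully: the relation $(u,v)\equiv\mu(m,n)\pmod d$ shows that modulo $d$ the vector $(u,v)$ is parallel to the primitive generator, so $d\mid (un-vm)$ as an honest integer — and here $un-vm$ is a fixed nonzero quantity unless $(m,n)\parallel(u,v)$ over $\Q$. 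I would split into two cases. In the generic case $un-vm\ne0$: then $d$ is a divisor of the fixed nonzero integer $L(m,n):=un-vm$. But $L(m,n)$ itself varies with $(m,n)$; to control this I instead argue that $(u,v)$, being a member of $\lambda(x)$, forces $\lambda(x)$ to lie among the finitely many sublattices of $\Z^2$ of determinant $d$ containing $(u,v)$, and such sublattices, as $d$ ranges over divisors of quantities of size $\|(u,v)\|$, number $\ll_\epsilon\|(u,v)\|^\epsilon$ by the divisor bound. Concretely: if $(u,v)$ is a primitive vector it is $B_1(x)$ or lies on the line through $B_1(x)$, and in either case $\lambda(x)$ is generated by $(u,v)/(\text{something})$ together with one more vector whose class mod the first is pinned down by $d$; the number of admissible $d$ is the number of divisors of a fixed integer of size $O(\|(u,v)\|)$, which is $\ll_\epsilon\|(u,v)\|^\epsilon$, and for each such $d$ the class $x$ is then essentially unique (at most $\#\mathcal U'(d)\ll_\epsilon d^\epsilon$ choices, absorbed into the same bound since $d\ll\|(u,v)\|^2$). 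In the degenerate case $un-vm=0$ over $\Q$, i.e. $(m,n)$ is a rational multiple of $(u,v)$, the primitive point generating $x$ is forced to be $\pm(u',v')$, so $x$ is a single fixed class and $\lambda(x)$ is then determined by $d$ alone subject to $f(u',v')\equiv0\pmod{d/(\ldots)}$ — again $\ll_\epsilon\|(u,v)\|^\epsilon$ choices of $d$ by the divisor bound applied to the fixed nonzero integer $f(u',v')$ (nonzero since $f$ is irreducible of degree $\ge3$).

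The main obstacle I anticipate is the bookkeeping in the first case: making precise the claim that the datum $(d,x)$ with $(u,v)\in\lambda(x)$ is equivalent to choosing a divisor of a fixed nonzero integer of size $O(\|(u,v)\|^{O(1)})$, so that the divisor bound $\tau(n)\ll_\epsilon n^\epsilon$ applies and yields $\|(u,v)\|^{\epsilon}$ rather than a power. The cleanest route is probably: let $(u_0,v_0)=(u,v)/(u;v)$ be the primitive vector on the ray; then $(u,v)\in\lambda(x)$ forces $(u_0,v_0)\in\lambda(x)$ as well (since $\lambda(x)$, having the property that its primitive points form a full equivalence class, contains the primitive vector of any of its rays), so $(u_0,v_0)=B_1(x)$ up to the choice of minimal basis — here one uses that $\|(u_0,v_0)\|\le\|(u,v)\|$ and that $B_1$ is a shortest nonzero vector — actually $(u_0,v_0)$ need not be shortest, but it is a primitive vector of $\lambda(x)$, hence equals $\pm$ a generator; then $\lambda(x)=\langle(u_0,v_0),\,w\rangle$ where the class of $w$ modulo $(u_0,v_0)$ is determined once $d=\det\lambda(x)$ is fixed, and $d$ must divide the resultant-type quantity forcing $f\equiv0$, a fixed nonzero integer polynomial in $(u_0,v_0)$ of size $\ll\|(u,v)\|^{k}$. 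Summing the divisor bound over the at most two rays (that through $(u,v)$) and the $\ll_\epsilon d^\epsilon$ classes per $d$ gives the stated estimate. I would present this as two short cases, invoking $\#\mathcal U'(d)\ll_\epsilon d^\epsilon$ from Daniel \cite[(3.5)]{daniel} and $\tau(n)\ll_\epsilon n^\epsilon$, and deferring no nontrivial computation.
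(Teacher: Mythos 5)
Your proposal has the right overall shape (bound the number of admissible $d$ by a divisor function, then use $\#\mathcal U'(d)=\nu(d)\ll_\epsilon d^\epsilon$ for each modulus), and the second factor is handled correctly, but the crucial first step is never actually secured: you need a \emph{fixed} nonzero integer, depending only on $(u,v)$, that $d$ must divide, and your argument never produces one. In the generic case, $un-vm$ depends on the class representative; the assertion that the sublattices of $\Z^2$ of determinant $d$ containing $(u,v)$ number $\ll_\epsilon\|(u,v)\|^\epsilon$ ``as $d$ ranges over divisors of quantities of size $\|(u,v)\|$'' presupposes exactly the divisibility you are trying to establish; and the unexplained bound $d\ll\|(u,v)\|^2$ fails when $(u,v)$ is parallel to $B_1(x)$ (then $\|B_2\|$, hence $d$, can be arbitrarily large compared with $\|(u,v)\|$). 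Worse, your ``cleanest route'' rests on the claim that the $\Z^2$-primitive vector $(u_0,v_0)=(u,v)/(u;v)$ on the ray through $(u,v)$ again lies in $\lambda(x)$; that is false. For instance, the lattice of points congruent modulo $4$ to a multiple of $(1,2)$ contains $(4,0)$ but not $(1,0)$: the fact that the points of $\lambda(x)$ which are primitive \emph{modulo} $d$ form the single class $x$ does not imply that $\lambda(x)$ contains the primitive vector of each of its rays, nor that a primitive vector of $\lambda(x)$ is $B_1(x)$ or parallel to it. Consequently the divisibility you would need, $d\mid f(u_0,v_0)$, also fails in general: one only has $d\mid (u;v)^k f(u_0,v_0)$.

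The missing observation, which is essentially the paper's entire proof, is that $\lambda(x)$ is contained in the solution set of $f\equiv 0\pmod d$: every point of $\lambda(x)$ is congruent mod $d$ to $\mu(m,n)$ with $f(m,n)\equiv 0\pmod d$, and $f$ is homogeneous, so $f(u,v)\equiv \mu^k f(m,n)\equiv 0\pmod d$. Since $f$ is irreducible and $(u,v)\ne 0$ we have $f(u,v)\ne 0$, so $d$ divides the fixed nonzero integer $f(u,v)\ll\|(u,v)\|^k$, and the divisor bound gives $\ll_\epsilon\|(u,v)\|^\epsilon$ admissible $d$; for each such $d$ (now necessarily $\ll\|(u,v)\|^k$) the number of classes $x$ is at most $\nu(d)\ll_\epsilon d^\epsilon\ll_\epsilon\|(u,v)\|^\epsilon$. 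No case split and no analysis of primitive vectors or of the minimal basis is needed; if you replace your generic/degenerate discussion by this one line, the rest of your argument (the count of $x$ per $d$) goes through as you wrote it.
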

\begin{proof}
Since $f$ is irreducible and $(u,v)\ne 0$ we know that $f(u,v)\ne 0$.  The number of possible $d$ is then bounded by 
$$\tau(f(u,v))\ll_\epsilon \|(u,v)\|^\epsilon.$$
For each such $d$ the number of possible $x$ cannot exceed $\nu(d)=O_\epsilon(d^\epsilon)$.  The result follows.
\end{proof}

Recall that $\det\lambda(x)\sim D$.  Therefore, if $B_{11}(x)\leq D^{1/2-\eta}$ we must have $B_{11}(x)=(u,v)$ for some $0\ne(u,v)\in\Z^2$ with $u\leq D^{1/2-\eta}$ and $v\ll D^{1/2}$.  It follows that the number of terms in our sums $S_1,S_2$ is at most $O_\epsilon(D^{1-\eta}N^\epsilon)$. We immediately   deduce that 
$$S_2\ll_\epsilon N^{2+\epsilon}D^{-\eta}.$$
To bound $S_1$ we use the standard estimate for the number of lattice points to get 
$$S_1\ll\sum_{d\sim D}\twosum{x\in\mathcal U'(d)}{B_{11}(x)\leq D^{1/2-\eta}}\left(\frac{N^2}{d}+\frac{N}{\|B_1(x)\|}+1\right).$$
From the above discussion we obtain the bounds 
$$\sum_{d\sim D}\twosum{x\in\mathcal U'(d)}{B_{11}(x)\leq D^{1/2-\eta}}\frac{N^2}{d}\ll_\epsilon N^{2+\epsilon}D^{-\eta}$$
and 
$$\sum_{d\sim D}\twosum{x\in\mathcal U'(d)}{B_{11}(x)\leq D^{1/2-\eta}}1\ll_\epsilon D^{1-\eta}N^\epsilon.$$
Finally we use Lemma \ref{smallcount} to get 
\begin{eqnarray*}
\lefteqn{\sum_{d\sim D}\twosum{x\in\mathcal U'(d)}{B_{11}(x)\leq D^{1/2-\eta}}\frac{N}{\|B_1(x)\|}}\\
&\ll_\epsilon&N^{1+\epsilon}\sum_{0<\|(u,v)\|\ll D^{1/2}}\frac{1}{\sqrt{u^2+v^2}}\\
&\ll_\epsilon&N^{1+\epsilon}D^{1/2}.\\
\end{eqnarray*}
We conclude that 
$$S_1+S_2\ll_\epsilon N^{1+\epsilon}D^{1/2}+D^{1-\eta}N^\epsilon+N^{2+\epsilon}D^{-\eta}$$
so this bound also holds for the contribution to $S$ from lattices with $B_{11}\leq D^{1/2-\eta}$.  Recalling that $N^{\delta_1}\leq D\leq N^{4/3-\delta_1}$ we see that if we take a  small enough $\epsilon$ then this bound is $O(N^{2-\delta_2})$ for $\delta_2>0$ sufficiently small in terms of $\delta_1$ and $\eta$.  It should be noted that the exponent $\frac{4}{3}$ is not critical for this part of the argument.

It remains to consider 
$$S_3=\sum_{d\sim D}\twosum{x\in\mathcal U'(d)}{B_{11}(x)>D^{1/2-\eta}}\left|\psi(\lambda(x),N,\alpha)-\frac{N\hat W(0)}{d}\sum_{m\leq N}\alpha_m\right|,$$
to which we will apply Theorem \ref{latticelargesieve}.  If $D\geq
N^{\delta_1}$ then $\eta$ is a quantity that we can take arbitrarily
small, whereas if $D\leq N^{\delta_1}$ then we shall take $\eta>1/2$,
(so that all lattices are included).

If $x\in \mathcal U'(d)$ then $\lambda(x)$ consists of all points congruent modulo $d$ to a multiple of some $(m,n)$ with $(m;d)=1$.  It follows that the $m$-coordinates of points in $\lambda(x)$ are coprime.  The sum is over lattices $\lambda(x)$ which have $\det\lambda(x)\sim D$ and $D^{1/2-\eta}<B_{11}(x)\ll D^{1/2}$.  For each possible value of $B_{11}(x)$ in this range there are $O(D^{1/2})$ permissible values for $B_{12}(x)$.  It follows by Lemma \ref{smallcount} that the number of lattices in the sum with any  given value of $B_{11}$ is at most $O_\epsilon(D^{1/2}N^\epsilon)$.  We therefore subdivide $S_3$ into $O_\epsilon(N^\epsilon)$ dyadic intervals depending on the size of $B_{11}$ and then subdivide each dyadic sum into $O_\epsilon(D^{1/2}N^\epsilon)$ subsums in which each possible value of $B_{11}$ occurs at most once.  The resulting subsums may be estimated using Theorem \ref{latticelargesieve}.  Suppose $\delta>0$.  If $D\leq N^{1-\delta}$ we get 
$$S_3\ll_{\epsilon,A} D^{1/2}N^{\epsilon-A},$$
for any $A\in\N$, which is certainly small enough.  If $D\geq N^{1-\delta}$ we must check the condition 
$$D\leq M_1N^{1-\delta}.$$
However $M_1\geq D^{1/2-\eta}$  so it is sufficient that 
$$D\leq N^{\frac{1-\delta}{1/2+\eta}}.$$
Since $D\leq N^{4/3-\delta_1}$ this is certainly satisfied if we take
$\delta,\eta$ small enough. (Since $D\geq N^{1-\delta}$ we are in the
case in which any $\eta>0$ is admissible).  
We may therefore deduce from Theorem \ref{latticelargesieve} that 
$$S_3\ll_\epsilon N^\epsilon D^{1/2}\cdot N^{1+2\delta+\epsilon}D^{1/4+\eta/2}\ll_\epsilon N^{1+2\delta+\epsilon}D^{3/4+\eta/2}.$$
Since $D\leq N^{4/3-\delta_1}$ we see that if we take $\delta,\epsilon$ and $\eta$ sufficiently small in terms of $\delta_1$ then 
$$S_3\ll_{\delta_1}N^{2-\delta_2}$$
for some $\delta_2>0$.  This is where the value $4/3$ is critical as for larger $D$ we do not get a nontrivial bound from Theorem \ref{latticelargesieve}.

We conclude that 
$$\sum_{d\sim D}|A_d(N,\alpha)-M_d(N,\alpha)|\ll_{\delta_1} N^{2-\delta_2}$$
for some $\delta_2>0$, thus completing the proof of Theorem \ref{formlod}.

When we apply the weighted sieve in the next section we will use the following upper bound to show that not too many values of $f$ are divisible by the square of a prime.

\begin{lem}\label{primesquare}
Let $\alpha_m$ be a sequence of complex numbers with $|\alpha_m|\leq 1$.  For any $\delta_1>0$ there exists a $\delta_2>0$, depending only on $\delta_1$, such that 
$$\sum_{N^{\delta_1}\leq p\leq N^{2-\delta_1}}|A_{p^2}(N,\alpha)|\ll_{\delta_1} N^{2-\delta_2},$$
the sum being over primes $p$.
\end{lem}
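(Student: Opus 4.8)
The plan is to bound $|A_{p^2}(N,\alpha)|$ for each relevant prime $p$ by a count of lattice points, mimicking the geometry-of-numbers setup used in the proof of Theorem \ref{formlod}, and then to sum crudely over $p$. First I would drop the coefficients $\alpha_m$, using $|\alpha_m|\le 1$ and the nonnegativity of $W$, so that
$$|A_{p^2}(N,\alpha)|\le \twosum{(m,n)\in(0,N]\times\Z}{f(m,n)\equiv 0\pmod{p^2}}W(n/N)\ll \#\{(m,n)\in(0,N]\times[0,CN]:f(m,n)\equiv 0\pmod{p^2}\}$$
for a suitable constant $C$ depending on $W$. Splitting into primitive and imprimitive residue classes modulo $p^2$ exactly as in the main argument, the imprimitive part forces $p\mid m$ or $p\mid n$, contributing $O(N^2/p)$ in total once summed; and the primitive part decomposes over the equivalence classes $x\in\mathcal U(p^2)$, each giving a lattice $\lambda(x)$ with $\det\lambda(x)=p^2$, so that
$$\#\{(m,n)\in(0,N]\times[0,CN]:f(m,n)\equiv 0\pmod{p^2},\ (m;n;p)=1\}\ll \sum_{x\in\mathcal U(p^2)}\#(\lambda(x)\cap[0,CN]^2).$$

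Next I would apply the standard lattice-point estimate $\#(\lambda\cap[0,CN]^2)\ll N^2/\det\lambda + N/\|B_1(\lambda)\| + 1$, together with $\#\mathcal U(p^2)\ll_\epsilon p^\epsilon$ (the bound cited from Daniel). The $N^2/\det\lambda$ term contributes $\ll_\epsilon p^\epsilon N^2/p^2$ per prime; the $+1$ term contributes $\ll_\epsilon p^\epsilon$ per prime. The term $N/\|B_1(\lambda(x))\|$ is the delicate one: here I would invoke Lemma \ref{smallcount}, which controls $\#\{(d,x):d\sim D, x\in\mathcal U'(d),(u,v)\in\lambda(x)\}$, adapted to the modulus $p^2$ rather than a dyadic range $d\sim D$ — the same proof works since the number of $p$ with $p^2\mid f(u,v)$ is $\ll_\epsilon\|(u,v)\|^\epsilon$ and the number of classes per modulus is $\ll_\epsilon p^\epsilon$. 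Summing $N/\|B_1\|$ over all short vectors $(u,v)$ with $\|(u,v)\|\ll N$ (the relevant range, since $B_1(\lambda(x))$ lies in $[0,CN]^2$) gives $\ll_\epsilon N^{1+\epsilon}\sum_{0<\|(u,v)\|\ll N}\|(u,v)\|^{-1}\ll_\epsilon N^{2+\epsilon}$, which after division among the $p$ in the range $[N^{\delta_1},N^{2-\delta_1}]$ — there are $O(N^{2-\delta_1})$ of them — is too large by itself; so instead I would be more careful and note that $B_1(\lambda(x))$ for a lattice of determinant $p^2$ has $\|B_1\|\ll p$, so the short-vector sum for a fixed $p$ runs over $\|(u,v)\|\ll p$, giving $\ll_\epsilon N^{1+\epsilon}\cdot p^\epsilon\cdot p$ per $p$ after incorporating the class count, i.e. $\ll_\epsilon N^{1+\epsilon}p^{1+\epsilon}$, and summing over $p\le N^{2-\delta_1}$ via Lemma \ref{smallcount} applied globally (each $(u,v)$ counted $O_\epsilon(\|(u,v)\|^\epsilon)$ times across all $p$) yields $\ll_\epsilon N^{1+\epsilon}\sum_{0<\|(u,v)\|\ll N}\|(u,v)\|^{-1}\ll_\epsilon N^{2+\epsilon}$. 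Combining, the total is
$$\sum_{N^{\delta_1}\le p\le N^{2-\delta_1}}|A_{p^2}(N,\alpha)|\ll_\epsilon N^2\sum_{p\ge N^{\delta_1}}p^{-2+\epsilon}+\sum_{p\le N^{2-\delta_1}}\frac{N^2}{p}+N^{2+\epsilon}+N^{2-\delta_1+\epsilon},$$
and the first term is $\ll_\epsilon N^{2-\delta_1+\epsilon}$ while the second is the genuine obstruction.

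The main obstacle is the term $\sum_{p\le N^{2-\delta_1}}N^2/p\asymp N^2\log\log N$, coming from imprimitive solutions (those with $p\mid m$, i.e. $p^2\mid f(m,n)$ forced through $p\mid m$, or $p\mid n$), which is not $O(N^{2-\delta_2})$. To defeat it I would exploit that $p\mid m$ with $m\le N$ and $p\ge N^{\delta_1}$ pins $m$ to one of $O(N^{1-\delta_1})$ values (multiples of $p$), and for each such $m$ the congruence $f(m,n)\equiv 0\pmod{p^2}$ either has $O_\epsilon(p^\epsilon)$ roots $n$ modulo $p^2$ — contributing $O_\epsilon(p^\epsilon(N/p^2+1))$ values of $n\le N$ — or is degenerate; carefully, $\sum_p\sum_{m\le N,\,p\mid m}(N/p^2+1)\ll N^2\sum_p p^{-3}+N\sum_{p\le N}\pi(N/p)$ which is $\ll N^{2-\delta_1}$-type once $p\ge N^{\delta_1}$ is used to bound $\pi(N/p)\ll N^{1-\delta_1}$. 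The case $p\mid n$ is symmetric via $p\mid m$ or $p\mid$ (leading coefficient) after a change of variable, using the hypothesis that $f(1,n)$ has no fixed prime factor analogue — actually here one uses that $f$ is a form, so $p\mid n$ and $p^2\mid f(m,n)$ with $p\nmid m$ forces $p\mid f_0$ (the leading coefficient in $m$), impossible for large $p$, hence $p\mid m$ too, reducing to the primitive-with-both-coordinates-divisible case which gives $\det$-gain. Assembling all pieces, every contribution is $O_{\delta_1}(N^{2-\delta_2})$ for a suitable $\delta_2>0$ depending only on $\delta_1$, which is the claim. I expect the bookkeeping around the $p\mid n$ case — ensuring it genuinely reduces to a situation with an extra factor of $p$ saved in the determinant or in the count of $m$ — to be the subtlest point, but no new idea beyond Lemma \ref{smallcount} and the divisor bound is needed.
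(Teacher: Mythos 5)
Your reduction to lattice-point counts follows the paper's setup, but the decisive term is exactly where your argument fails. After bounding $|A_{p^2}(N,\alpha)|$ by $\sum_{x\in\mathcal U(p^2)}\#(\lambda(x)\cap[0,CN]^2)$ and applying the standard estimate, the delicate piece is $N\sum_p\sum_{x\in\mathcal U(p^2)}\|B_1(\lambda(x))\|^{-1}$, and your final bound for it is $N^{2+\epsilon}$, which is \emph{not} $O(N^{2-\delta_2})$; so the lemma is not proved. Your attempted refinement via $\|B_1\|\ll p$ buys nothing, because $p$ runs up to $N^{2-\delta_1}$, so the constraint $\|(u,v)\|\ll p$ is no stronger than the constraint $\|(u,v)\|\ll N$ already imposed by the box, and $\sum_{0<\|(u,v)\|\ll N}\|(u,v)\|^{-1+\epsilon}\gg N$ leaves no saving. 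The missing idea (the paper's key step) is a descent from modulus $p^2$ to modulus $p$: equivalence mod $p^2$ implies equivalence mod $p$, so each $x\in\mathcal U(p^2)$ yields some $x'\in\mathcal U(p)$ with $\lambda(x)\subseteq\lambda(x')$, hence $\|B_1(\lambda(x))\|\geq\|B_1(\lambda(x'))\|$, and each $x'$ arises with multiplicity $\ll_\epsilon N^\epsilon$. One is then summing $\|B_1(\lambda(x'))\|^{-1}$ over lattices of determinant $p\leq N^{2-\delta_1}$, whose shortest vectors satisfy $\|B_1\|\ll p^{1/2}\leq N^{1-\delta_1/2}$; a short-vector count with divisor-bound multiplicity (the same mechanism as Lemma \ref{smallcount}, quoted in the paper from Daniel's $T_1^*(Q)$ estimate) then gives $O_\epsilon(N^{1-\delta_1/2+\epsilon})$, hence the required total $O_\epsilon(N^{2-\delta_1/2+\epsilon})$. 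Without this descent the exponent saving is genuinely unavailable, not just inconvenient.

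A secondary error: your ``genuine obstruction'' $\sum_{p\leq N^{2-\delta_1}}N^2/p\asymp N^2\log\log N$ is an artifact of misreading imprimitivity. A point is imprimitive modulo $p^2$ when $p\mid(m;n)$, i.e.\ $p$ divides \emph{both} coordinates, not ``$p\mid m$ or $p\mid n$''; such points number $O((N/p+1)^2)$ for each prime, and summing over $N^{\delta_1}\leq p\leq N^{2-\delta_1}$ gives $O(N^{2-\delta_1}+N\log\log N)$, which is harmless. Consequently the elaborate workaround in your final paragraph (pinning $m$ to multiples of $p$, the $p\mid f_0$ reduction, the claimed $O_\epsilon(p^\epsilon)$ root counts modulo $p^2$) is directed at a problem that does not exist, and its details are themselves not justified as stated; the paper needs no such case analysis.
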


\begin{proof}
We have 
$$A_{p^2}(N,\alpha)=\twosum{(m,n)\in (0,N]\times\Z}{f(m,n)\equiv 0\pmod {p^2}}\alpha_m W(\frac{n}{N})\ll \#\{(m,n) \in [0,N]^2:f(m,n)\equiv 0\pmod{p^2}\}.$$
If $f(m,n)\equiv 0\pmod{p^2}$ then $(m,n)\in \lambda(x)$ for at least one $x\in\mathcal U(p^2)$.  It follows that 
$$A_{p^2}(N,\alpha)\ll\sum_{x\in\mathcal U(p^2)}\#(\lambda(x)\cap [0,N]^2).$$ 
We may bound this by 
$$\sum_{x\in \mathcal U(p^2)}(\frac{N^2}{p^2}+\frac{N}{\|B_1(x)\|}+1).$$
Using that $\#\mathcal U(p^2)\ll_\epsilon N^\epsilon$ we have 
$$\sum_{N^{\delta_1}\leq p\leq N^{2-\delta_1}}\sum_{x\in\mathcal U(p^2)}(\frac{N^2}{p^2}+1)\ll_\epsilon N^{2-\delta_1+\epsilon}.$$
It therefore remains to estimate 
$$N\sum_{N^{\delta_1}\leq p\leq N^{2-\delta_1}}\sum_{x\in \mathcal U(p^2)}\frac{1}{\|B_1(x)\|}.$$
If points are equivalent modulo $p^2$ then they must also be
equivalent modulo $p$.  It follows that if $x\in \mathcal U(p^2)$ then
there is some $x'\in \mathcal U(p)$ with $\lambda(x)\subseteq
\lambda(x')$.  Different equivalence classes in $\mathcal U(p^2)$ may
give rise to the same class in $\mathcal U(p)$ but the total number of
times a class may occur cannot exceed $\#\mathcal U(p^2)\ll_\epsilon
N^\epsilon$.  Our sum is therefore majorised by
$$N^{1+\epsilon}\sum_{N^{\delta_1}\leq p\leq N^{2-\delta_1}}\sum_{x\in \mathcal U(p)}\frac{1}{\|B_1(x)\|}.$$
To estimate this final sum we use part of Daniel's proof of
\cite[Lemma 3.2]{daniel}; which is very similar to our above
derivation of a bound on $S_1$.  Specifically, if we set $Q=N^{2-\delta_1}$, our sum is bounded by the quantity $T_1^*(Q)$ defined in that proof so it is $O_\epsilon(N^{1-\delta_1/2+\epsilon})$.  We therefore conclude that
$$N^{1+\epsilon}\sum_{N^{\delta_1}\leq p\leq N^{2-\delta_1}}\sum_{x\in \mathcal U(p)}\frac{1}{\|B_1(x)\|}\ll_\epsilon N^{2-\delta_1/2+\epsilon}.$$
The result follows on combining the above estimates and taking $\delta_2<\delta_1/2$.  
\end{proof}

\section{Proof of Theorem \ref{almostprimeform}}

We will sieve the sequence $\ca=(a_l)$ given by 
$$a_l=\twosum{(m,n)\in (0,N]\times\Z}{|f(m,n)|=l}\chi(m)W(\frac{n}{N}).$$
This is supported on $l\ll_f N^k$ and by Theorem \ref{formlod} we know
that it has level of distribution $N^\theta$ for any $\theta<\frac43$.
Since $f$ is irreducible we deduce from the prime ideal theorem that
the values $\nu(p)$ are $1$ on average and we may therefore use a
$1$-dimensional weighted sieve.  By assumption we know that $\nu(p)<p$
for all primes $p$. It can therefore be shown that
$$\prod_{p<z}(1-\frac{\nu(p)}{p})=\frac{c_f+o(1)}{\log z}$$
for some $c_f>0$.  

We use the weighted sieve as described by Greaves in \cite[Chapter 5]{greavesbook}.  If $r\geq 2$ we deduce that if 
$$\frac{3}{4}k<r-\delta_r$$
then for all sufficiently large $N$ we have 
$$\sum_l{}^*\, a_l\gg \frac{N^2}{(\log N)^2},$$
where $\sum^*$ denotes a sum over certain $l$ which are the product of at most $r$ distinct primes.  Specifically, \cite[Section 5.2]{greavesbook} shows that we can take $\delta_r=0.144001\ldots$.  The above estimate therefore follows if 
$$r>\frac{3}{4}k+0.15$$
which is equivalent to $r\geq [3k/4]+1$.  Observe that it is essential that we had $\delta_r<\frac14$. The
simplest form of the weighted sieve \cite[Section 5.1]{greavesbook}
would therefore have been insufficient.

It remains to show that we can produce numbers with at most $r$ prime factors when counted with multiplicity.  Examining the construction of the sieve it can be seen that there are constants $0<\alpha<\beta<2$, depending on $r$, such that $\sum^*$ is actually a sum over $l$ all of whose prime factors exceed $N^\alpha$ and for which 
$$\twosum{p|l}{p\leq N^\beta}1+\sum_{p\geq N^\beta}\sum_{a:\,p^a|l}1\leq r.$$
This means that only prime factors smaller than $N^\beta$ are counted without multiplicities.  We can deduce from Lemma \ref{primesquare} that the contribution of $l$ which are divisible by $p^2$ for $p\in [N^\alpha,N^\beta]$ is $O(N^{2-\delta})$ for some $\delta>0$ depending on $\alpha$ and $\beta$.  We may therefore conclude that for all sufficiently large $N$ we have 
$$\sum_{l\in P_r}a_l\gg \frac{N^2}{(\log N)^2}$$
thereby completing the proof of Theorem \ref{almostprimeform}.

\newpage

\addcontentsline{toc}{section}{References} 
\bibliographystyle{plain}
\bibliography{../biblio}

\bigskip
\bigskip

Mathematical Institute,

University of Oxford,

Andrew Wiles Building, 

Radcliffe Observatory Quarter, 

Woodstock Road, 

Oxford 

OX2 6GG 

UK
\bigskip

{\tt irving@maths.ox.ac.uk}

\end{document}